\author[J. Spielmann]{J\'{e}r\^{o}me Spielmann*}
\thanks{*: I would like to thank Lioudmila Vostrikova for helpful discussions and for taking the time to read through the first draft of the manuscript. I would also like to acknowledge financial support from the D\'{e}fiMaths project of the "F\'{e}d\'{e}ration de Recherche Math\'{e}matiques des Pays de Loire", form the PANORisk project of the R\'{e}gion Pays de la Loire and from the French government's "Investissements d'Avenir" program ANR-11-LABX-0020-01.}
\address{LAREMA, Universit\'{e} d'Angers, 2 Bd. Lavoisier, 49045 Angers}
\email{jerome.spielmann@univ-angers.fr}
\title[Classification of the Bounds on the Probability of Ruin]{Classification of the Bounds on the Probability of Ruin for L\'{e}vy Processes with Light-tailed Jumps}
\tikzset{
>=stealth',
help lines/.style={dashed, thick},
axis/.style={<->},
important line/.style={thick},
connection/.style={thick, dotted},
}
\def\nobottom{%
\def\@texttop{\ifnum\c@page>0\vskip \z@ plus 3fil\relax\fi}
\def\@textbottom{\ifnum\c@page>0\vskip \z@ plus 2fil\relax\fi}}
\def\resetopandbottom{%
\def\@texttop{\ifnum\c@page>0\vskip 0pt plus .00006fil\relax\fi}
\def\@textbottom{\ifnum\c@page>0\vskip 0pt plus .00006fil\relax\fi}}
\newcommand{\expect}{\operatorname{\mathbb{E}}\expectarg}
\DeclarePairedDelimiterX{\expectarg}[1]{(}{)}{%
  \ifnum\currentgrouptype=16 \else\begingroup\fi
  \activatebar#1
  \ifnum\currentgrouptype=16 \else\endgroup\fi
}
\newcommand{\innermid}{\nonscript\;\delimsize\vert\nonscript\;}
\newcommand{\activatebar}{%
  \begingroup\lccode`\~=`\|
  \lowercase{\endgroup\let~}\innermid 
  \mathcode`|=\string"8000
}
\DeclarePairedDelimiterX{\expectargq}[1]{(}{)}{%
  \ifnum\currentgrouptype=16 \else\begingroup\fi
  \activatebarq#1
  \ifnum\currentgrouptype=16 \else\endgroup\fi
}
\newcommand{\innermidq}{\nonscript\;\delimsize\vert\nonscript\;}
\newcommand{\activatebarq}{%
  \begingroup\lccode`\~=`\|
  \lowercase{\endgroup\let~}\innermidq
  \mathcode`|=\string"8000
}
\newtheorem{defi}{Definition}
\newtheorem{prop}{Proposition}
\newtheorem{thm}{Theorem}
\newtheorem{rmk}{Remark}
\newtheorem{ass}{Assumption}
\newtheorem{coro}{Corollary}
\newcommand{\nN}{\mathbb{N}}
\newcommand{\rR}{\mathbb{R}}
\newcommand{\pP}{\mathbf{P}}
\newcommand{\eE}{\mathbf{E}}
\newcommand{\fF}{\mathbb{F}}
\begin{document}

\begin{abstract}
In this note, we study the ultimate ruin probabilities of a real-valued L\'{e}vy process $X$ with light-tailed negative jumps. It is well-known that, for such L\'{e}vy processes, the probability of ruin decreases as an exponential function with a rate given by the root of the Laplace exponent, when the initial value goes to infinity. Under the additional assumption that $X$ has integrable positive jumps, we show how a finer analysis of the Laplace exponent gives in fact a complete description of the bounds on the probability of ruin for this class of L\'{e}vy processes. This leads to the identification of a case that was not considered before. We apply the result to the Cram\'{e}r-Lundberg model perturbed by Brownian motion.
\vspace{3mm}

\noindent \textbf{Keywords:} Laplace exponent; L\'{e}vy processes; Lundberg equation; Perturbed model; Ruin probabilities.
\end{abstract}

\maketitle

\noindent \subjclass{MSC 2010 subject classifications: }{60G51, 91B30}\\
\subjclass{JEL Classification : }{G220}

\section{Introduction and Main Result}

Ruin theory studies in particular the time of passage below $0$ of stochastic processes that represent the capital of an insurance company or a pension fund. In particular, it studies the probability that the process becomes negative on an infinite time horizon in function of the initial value of the process. The key result of Cram\'{e}r \cite{cramer1938} is that, in the case of the compound Poisson process with drift, this probability decreases as an exponential function with a rate given as a solution to the \emph{Lundberg equation}. It is well-know that, when the initial value goes to infinity, the result of Cram\'{e}r holds for more general (light-tailed) L\'{e}vy processes where the rate is given by the root of the Laplace exponent of the process, see Theorem XI.2.6 in \cite{asmussen2010}, and also \cite{bertoin1994}, \cite{kkm} and Section 7.2 in \cite{kyp2014}.

In this note, we show that a finer analysis of the Laplace exponent can lead to a complete description of the bounds on the ultimate probability of ruin. Our main contribution is to give a systematic description of all possible cases (Theorem 1), where the case when it has a root (Theorem 1, Case B) corresponds to the well-known Lundberg bound. This also leads to the identification of a case that is not considered in the literature (Theorem 1, Case D). We show that in this case the ruin probability also decreases at least as an exponential function and identify the rate of decay. Thus, Theorem 1 gives a method for obtaining exponential bounds and conditions for ruin with probability one for a large class of risk models. We illustrate this by applying the method to the Cram\'{e}r-Lundberg model perturbed by Brownian motion (Proposition 1).

When the L\'{e}vy process has jumps only on one side (i.e., it is \emph{spectrally one-sided}), the results contained in Chapter 8 of \cite{kyp2014} and the references therein give a precise description of the ultimate ruin probability in terms of the so-called \emph{scale functions}. However, these scale functions are in general not very explicit. In comparison, the method presented here is more elementary and less precise but works also in the case where there are two-sided jumps and is, in some cases, more explicit.

\subsection{L\'{e}vy Processes and Laplace Exponents}

In this section, we state some basic facts about L\'{e}vy processes and present the main assumptions for the rest of this paper.


Let $X = (X_t)_{t \geq 0}$ be a real-valued L\'{e}vy process on $(\Omega, \mathcal{F}, \fF = (\mathcal{F}_t)_{t \geq 0}, \pP)$ (in the sense of \cite{js2003}, Definition II.4.1, p.101) where the filtration $\fF$ is assumed to satisfy the usual conditions. It is well-known that the characteristic function of $X_t$ for each $t \geq 0$ is given by the L\'{e}vy-Khintchine formula:
$$\eE\left(e^{i\lambda X_t}\right) = e^{t\Phi(\lambda)}, \text{ for all } t \geq 0 \text{ and } \lambda \in \rR,$$
\noindent where 
\begin{equation*}
\Phi(\lambda) = ia\lambda - \frac{\sigma^2}{2}\lambda^2 + \int_{\rR}\left(e^{i\lambda x}-1-i\lambda x \mathbbm{1}_{\{|x| < 1\}}\right)\Pi(dx), \text{ for all } \lambda \in \rR,
\end{equation*}
\noindent for $a \in \rR$, $\sigma \geq 0$ and $\Pi$ a L\'{e}vy measure on $\rR$ satisfying $\Pi(\{0\}) = 0$ and
$$\int_{\rR}\left(x^2 \wedge 1\right) \Pi(dx) < +\infty.$$

\noindent The function $\Phi$ and the triplet $(a, \sigma^2, \Pi)$ are unique and are called the L\'{e}vy exponent and the characteristics (or L\'{e}vy triplet) of $X$ respectively. 

\begin{ass}
$X$ is integrable.
\end{ass}

The first assumption we use is integrability. We say that $X$ is integrable if $\eE(|X_1|) < +\infty$ and it can be shown (see e.g. \cite{sato1999}, Theorem 25.3, p.159) that this is equivalent to the condition
$$\int_{|x| \geq 1}|x| \Pi(dx) < +\infty.$$

\noindent Under assumption (I), we can rewrite the L\'{e}vy exponent of $X$ as
\begin{equation}
\Phi(\lambda) = i\delta\lambda - \frac{\sigma^2}{2}\lambda^2 + \int_{\rR}\left(e^{i\lambda x}-1-i\lambda x\right)\Pi(dx), \text{ for all } \lambda \in \rR,
\end{equation}
\noindent where 
$$\delta \triangleq \eE(X_1) = a + \int_{|x|\geq 1}x\Pi(dx).$$

\noindent Also, under assumption (I), the L\'{e}vy-It\^{o} decomposition of $X$ is
\begin{equation}\label{li_int}
X_t = \delta t + \sigma W_t + \int_{0}^t\int_{\rR}x \left(\mu^X-\nu^X\right)(ds, dx), \text{ for all } t \geq 0,
\end{equation}
\noindent where $\mu^X$ is the jump measure of $X$, $\nu^X(ds, dx) = ds\Pi(dx)$ is the compensator of the jump measure (see \cite{js2003}, Theorem I.1.8, p.66) and $(W_t)_{t \geq 0}$ is a standard Brownian motion.

\begin{ass}
$X$ has light-tailed negative jumps.
\end{ass}

The second assumption we will use is a condition on the tail behaviour of the negative jumps. Similar definitions to the one below can be found on p.338 in \cite{asmussen2010} and p.164-165 in \cite{sato1999}.

\begin{defi}
Let $X = (X_t)_{t \geq 0}$ be a real-valued L\'{e}vy process with characteristics $(a, \sigma^2, \Pi)$. Let 
$$\gamma_c \triangleq \sup\left\{\gamma \geq 0 : \int_{-\infty}^{-1}e^{-\gamma x}\Pi(dx) < +\infty\right\}.$$
\noindent We say that $X$ has light-tailed negative jumps if $\gamma_c > 0$. (Note that $\gamma_c$ can take the value $+\infty$.)
\end{defi}


Under Assumptions (I) and (II), it is possible to show that the L\'{e}vy exponent exists also for any $\lambda = i\gamma$, with $\gamma \in [0, \gamma_c)$. In fact, when $\gamma \in [0, \gamma_c)$,
$$\Phi(i\gamma) = -\delta \gamma + \frac{\sigma^2}{2}\gamma^2 + \int_{\rR}\left(e^{-\gamma x} - 1 + \gamma x\right)\Pi(dx),$$
\noindent and letting $I_- \triangleq \int_{\rR_-}\left|e^{-\gamma x}-1 + \gamma x\right|\Pi(dx)$, we obtain using the Taylor formula,
\begin{equation*}
\begin{split}
I_- & \leq \int_{-1}^0\left|e^{-\gamma x} - 1 + \gamma x\right|\Pi(dx) + \int_{-\infty}^{-1}e^{-\gamma x}\Pi(dx) \\
& \leq \frac{\gamma^2}{2}\int_{-1}^0x^2\Pi(dx) + \int_{-\infty}^{-1}e^{-\gamma x}\Pi(dx) < +\infty.
\end{split}
\end{equation*}
\noindent On the other hand, letting $I_+ \triangleq \int_{\rR_+}\left|e^{-\gamma x} -1 + \gamma x\right|\Pi(dx)$ and using the Taylor formula and the assumption of integrability,
\begin{equation*}
\begin{split}
I_+ & = \int_0^1\left|e^{-\gamma x} - 1 + \gamma x\right|\Pi(dx) + \int_1^{\infty}\left|e^{-\gamma x} - 1 + \gamma x\right|\Pi(dx) \\
& \leq \frac{\gamma^2}{2}\int_0^1x^2\Pi(dx) + \Pi\left([1, +\infty)\right) + \gamma \int_1^{\infty}|x|\Pi(dx) < +\infty.
\end{split}
\end{equation*}
\noindent Therefore, it is possible to define the Laplace exponent of $X$ as the function $\Psi$ given by
\begin{equation*}\label{eqpsi}
\Psi(\gamma) \triangleq \Phi(i\gamma) = -\delta \gamma + \frac{\sigma^2}{2}\gamma^2 + \int_{\rR}\left(e^{-\gamma x} - 1 + \gamma x\right) \Pi(dx), \text{ for all } \gamma \in [0, \gamma_c).\end{equation*}

\begin{rmk}
The Laplace exponent is always defined on $\rR_-$ and can, under Assumptions (I) and (II), be defined on $(-\infty, \gamma_c)$.
\end{rmk}

\noindent From the L\'{e}vy-Khintchine formula, we see that the Laplace transform of $X_t$ is then given by
$$\eE\left(e^{-\gamma X_t}\right) = e^{t\Psi(\gamma)}, \text{ for all } t \geq 0 \text{ and } \gamma \in (-\infty, \gamma_c).$$

\subsection{Main Result and Application}

Suppose that $X = (X_t)_{t \geq 0}$ is a real-valued L\'{e}vy process satisfying assumptions (I) and (II). Let $Y^u_t \triangleq u + X_t$, for $t \geq 0$ and $u \geq 0$. We define the ultimate ruin probability as 
$$\pP\left(\inf_{0 \leq t < +\infty}Y^u_t \leq 0\right) = \pP\left(\inf_{0 \leq t < +\infty}X_t \leq -u\right) = \pP\left(\sup_{0 \leq t < +\infty}(-X_t) \geq u\right).$$

\noindent This can also be written as $\pP\left(\tau(u) < +\infty\right)$ where $\tau(u) \triangleq \inf\{t \geq 0 : X_t \leq -u\}$ and $\tau(u) \triangleq +\infty$, if $X$ never goes below $-u$. We are now ready to give the main result. 

\begin{thm}\label{theorem}
Let $X = (X_t)_{t \geq 0}$ be a (non-zero) real-valued L\'{e}vy process satisfying Assumptions (I) and (II) and $\Psi : [0, \gamma_c) \to \rR$ be the Laplace exponent of $X$. Then, there are only four possible cases.

\begin{enumerate}[(A)]
\item If $\Psi(\gamma) > 0$, for all $\gamma \in (0, \gamma_c)$, then $\pP(\tau(u) < +\infty) = 1$, for all $u \geq 0$.
\item If there exists $\gamma_0 \in (0, \gamma_c)$ such that $\Psi(\gamma_0) = 0$, then $\pP(\tau(u) < +\infty) \leq e^{-\gamma_0 u}$, for all $u \geq 0$.
\item If $\gamma_c = +\infty$ and $\Psi(\gamma) < 0$, for all $\gamma \in (0, +\infty)$, then $\sigma^2 = 0$, $\Pi(\rR_{-}) = 0$, $\delta > 0$ and which means that $X$ is a subordinator. Therefore, $\pP(\tau(u) < +\infty) = 0$, for all $u \geq 0$.
\item If $\gamma_c < +\infty$ and $\Psi(\gamma) < 0$, for all $\gamma \in (0, \gamma_c)$, then $\pP\left(\tau(u) < +\infty\right) \leq e^{-\gamma_c u}$, for all $u \geq 0$.
\end{enumerate}
\end{thm}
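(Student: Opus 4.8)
The plan is to organize the proof around the convexity and boundary behavior of $\Psi$ on $[0,\gamma_c)$, together with an exponential martingale argument supplying the bounds in Cases (B) and (D). First I would record the elementary analytic facts about $\Psi$: since $\gamma \mapsto e^{-\gamma x}$ is convex for each fixed $x$, the integrand $e^{-\gamma x}-1+\gamma x$ is convex in $\gamma$, so $\Psi$ is a convex function on $[0,\gamma_c)$; moreover $\Psi(0)=0$ and $\Psi'(0^+) = -\delta$ (differentiating under the integral sign, which is justified exactly as in the integrability estimates for $I_\pm$ in the excerpt). Convexity then forces a dichotomy: either $\Psi$ stays strictly positive on all of $(0,\gamma_c)$, or it has at most one zero $\gamma_0$ in the open interval, or it stays strictly negative on all of $(0,\gamma_c)$ — and these three analytic alternatives, refined by whether $\gamma_c=+\infty$ or $\gamma_c<+\infty$, exhaust exactly the four enumerated cases (the "positive" branch being (A), the "has a root" branch being (B), and the "negative" branch splitting into (C) when $\gamma_c=+\infty$ and (D) when $\gamma_c<+\infty$). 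This bookkeeping step establishes that the four cases are mutually exclusive and collectively exhaustive.

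Next I would treat the probabilistic content case by case. For Case (B), the standard Lundberg argument: since $\eE(e^{-\gamma_0 X_t}) = e^{t\Psi(\gamma_0)} = 1$, the process $(e^{-\gamma_0 X_t})_{t\geq 0}$ is a nonnegative martingale; applying optional stopping at $\tau(u)\wedge t$ and Fatou's lemma gives $1 \geq \eE(e^{-\gamma_0 X_{\tau(u)}}\mathbbm{1}_{\{\tau(u)<\infty\}}) \geq e^{\gamma_0 u}\pP(\tau(u)<\infty)$, since on $\{\tau(u)<\infty\}$ we have $X_{\tau(u)} \leq -u$. This yields $\pP(\tau(u)<\infty)\leq e^{-\gamma_0 u}$. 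For Case (D), the same argument cannot be run at $\gamma_c$ itself (where $\Psi$ may fail to be finite or may be negative), so instead I would fix an arbitrary $\gamma \in (0,\gamma_c)$; then $\eE(e^{-\gamma X_t}) = e^{t\Psi(\gamma)} \leq 1$ because $\Psi(\gamma)<0$, so $(e^{-\gamma X_t})_{t\geq 0}$ is a nonnegative supermartingale, and optional stopping plus Fatou gives $\pP(\tau(u)<\infty)\leq e^{-\gamma u}$; letting $\gamma \uparrow \gamma_c$ yields the bound $e^{-\gamma_c u}$. (The same supermartingale estimate, incidentally, also covers (B) and shows the bound is uniform, but I would present (B) via the martingale for clarity.)

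For Case (C), the point is to extract the structural conclusions from the hypothesis $\Psi<0$ on all of $(0,+\infty)$. Using $\Psi(\gamma) = -\delta\gamma + \frac{\sigma^2}{2}\gamma^2 + \int_{\rR}(e^{-\gamma x}-1+\gamma x)\Pi(dx)$ and the fact that the integral term is nonnegative (convexity of the exponential), dividing by $\gamma^2$ and letting $\gamma\to+\infty$ forces $\sigma^2=0$; if $\Pi$ charged $\rR_-$ with positive mass then, since $\gamma_c=+\infty$, the term $\int_{-\infty}^{-1}(e^{-\gamma x}-1+\gamma x)\Pi(dx)$ grows super-linearly (indeed exponentially) in $\gamma$ while $-\delta\gamma$ is only linear, contradicting $\Psi<0$ for large $\gamma$; hence $\Pi(\rR_-)=0$. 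With $\sigma^2=0$ and $\Pi(\rR_-)=0$, the Lévy–Itô decomposition \eqref{li_int} shows $X$ has no negative jumps and no Gaussian part, so $X_t = \delta t + (\text{compensated positive jumps})$; evaluating $\Psi'(0^+) = -\delta \leq \lim_{\gamma\to 0^+}\Psi(\gamma)/\gamma \leq 0$ (from $\Psi<0$) gives $\delta\geq 0$, and $\delta=0$ together with $\Pi(\rR_-)=0$ would make $X$ a pure nonnegative-jump martingale with $\Psi\geq 0$, contradicting $\Psi<0$ unless $X\equiv 0$; since $X$ is nonzero we get $\delta>0$, so $X$ is a subordinator with strictly positive drift, whence $X_t \geq \delta t > 0$ for $t>0$ and $\pP(\tau(u)<\infty)=0$. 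Case (A) requires showing ruin is certain: here I would invoke the known fluctuation theory — a Lévy process with $\eE(X_1) = \delta \leq 0$ (which holds in Case (A), since $\Psi'(0^+)=-\delta\geq 0$ as $\Psi>0$ near $0$) and which is not a subordinator oscillates or drifts to $-\infty$, so $\inf_{t\geq 0} X_t = -\infty$ a.s. and $\pP(\tau(u)<\infty)=1$. The main obstacle I anticipate is Case (A): making the "ruin is certain" step rigorous requires either citing the Lévy process analogue of the Chung–Fuchs/oscillation dichotomy or giving a self-contained argument ruling out $\liminf X_t > -\infty$, and one must be careful about the borderline subcase $\delta = 0$ with a nontrivial Gaussian or jump part, where the process oscillates rather than drifts; the cases (B), (C), (D) are comparatively routine once the convexity skeleton is in place.
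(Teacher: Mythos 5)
Your proposal is correct in its overall architecture and reaches all four conclusions, but it takes a genuinely different route from the paper in three places, and it is worth seeing what each buys. For Cases (B) and (D) you use optional stopping of the nonnegative (super)martingale $(e^{-\gamma X_t})_{t\ge 0}$ at $\tau(u)\wedge t$ followed by Fatou/monotone convergence; the paper instead proves an exact identity $\pP(\tau(u)<\infty)=C(u)e^{-\gamma_0 u}$ (its Proposition \ref{thmexpasmp}), which requires the extra verification that $X_t\to+\infty$ on $\{\tau(u)=\infty\}$, and for Case (D) it builds the drift-tilted auxiliary process $Z^{\epsilon}_t=\frac{\Psi(\gamma_c-\epsilon)}{\gamma_c-\epsilon}t+X_t$ whose Laplace exponent has a root at $\gamma_c-\epsilon$, then lets $\epsilon\to 0$. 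Your supermartingale argument is shorter, needs no auxiliary process and no law of large numbers for (B)/(D); the paper's version yields the constant $C(u)$ and hence slightly more information. For Case (C) you extract $\sigma^2=0$ and $\Pi(\rR_-)=0$ from the asymptotics of $\Psi(\gamma)/\gamma^2$, whereas the paper works with $\Psi'(\gamma)\le 0$; both are sound (your Fatou argument should be run over all of $\rR_-$, not just $(-\infty,-1]$, since the mass could sit in $(-1,0)$, but the same pointwise blow-up of $e^{-\gamma x}$ applies there). For Case (A) you defer to the oscillation/drift dichotomy; the paper does essentially the same, proving a strong law of large numbers for the $\delta<0$ subcase and citing recurrence results of Sato for $\delta=0$, so your level of rigor matches provided you actually cite the Chung--Fuchs-type result for the oscillating case.

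One step in your Case (C) is genuinely false as written: under Assumption (I) the decomposition is $X_t=\delta t+\int_0^t\int_{\rR_+}x(\mu^X-\nu^X)(ds,dx)$, and the compensated jump integral is a martingale that can be negative, so $X_t\ge\delta t$ does not hold (between jumps the path has slope $b_0=\delta-\int_{\rR_+}x\,\Pi(dx)$, which is smaller than $\delta$). The conclusion that $X$ is a subordinator therefore needs the additional fact that $b_0\ge 0$; this does follow from the Case (C) hypothesis, since monotone convergence gives $\Psi(\gamma)/\gamma\to-\delta+\int_{\rR_+}x\,\Pi(dx)=-b_0$ as $\gamma\to+\infty$, and $\Psi<0$ forces this limit to be $\le 0$. (The paper's own proof of Case (C) also stops at ``$\sigma^2=0$, $\Pi(\rR_-)=0$, $\delta>0$'' without checking the sign of $b_0$, so this repair is needed there as well; with $b_0\ge 0$ and only positive jumps one gets $X_t\ge 0$ for all $t$ and the stated conclusion.) With that one step fixed, your proof is complete.
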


Thus, Theorem \ref{theorem} exhausts all possible cases and allows one to classify the behaviour of the ruin probability in function of the behaviour of the Laplace exponent for a large class of risk models. To illustrate how to use Theorem \ref{theorem}, we apply it to the Cram\'{e}r-Lundberg model perturbed by Brownian motion. This model, which is sometimes also called \emph{perturbed risk process} and was studied first in \cite{gerber1970}, is given by
\begin{equation}\label{eq_mod2}
Y_t^u = u + pt + \sigma W_t - \sum_{n = 1}^{N_t}U_n, \text{ for all } t \geq 0,
\end{equation}
\noindent where $p > 0$, $\sigma > 0$, $N = (N_t)_{t \geq 0}$ is a standard Poisson process with rate $\beta$, $(W_t)_{t \geq 0}$ is a standard Brownian motion and $U = (U_n)_{n \in \nN}$ is a sequence of i.i.d. exponential random variables with rate $\alpha$. Additionally, it is assumed that the processes $N$, $W$ and the sequence $U$ are independent from each other.

Then, the following proposition gives the description of the ruin probabilities for this model. Note that in contrast to the case when $\sigma^2 = 0$, there are two possible regimes when the \emph{safety loading condition} $p > \frac{\beta}{\alpha}$ is satisfied. This shows how the uncertainty in premium payments affects the ruin probability. Also note that this result is very explicit as the behaviour of the ruin probability only depends on the value of the parameters and that it gives the complete description of the possible cases.

\begin{prop}
Let $X = (X_t)_{t \geq 0}$ be a real-valued L\'{e}vy process with L\'{e}vy triplet $\Pi(dx) = \beta \alpha e^{\alpha x}\mathbbm{1}_{\{x \leq 0\}}dx$, $\sigma^2 > 0$ and $a = p + \int_{|x| < 1}x\Pi(dx)$ for some $p, \alpha, \beta > 0$. Then, $Y^u_t = u + X_t$, with $u \geq 0$, corresponds to the perturbed risk process given by (\ref{eq_mod2}). Let $\Delta \triangleq (\sigma^2\alpha - 2p)^2 + 8\sigma^2\beta$ and $\gamma_- \triangleq \frac{\sigma^2 \alpha + 2p - \sqrt{\Delta}}{2 \sigma^2}$.
\begin{itemize}
\item If $p \leq \frac{\beta}{\alpha}$, then $\pP(\tau(u) < +\infty) = 1$, for all $u \geq 0$.
\item If $p > \frac{\beta}{\alpha}$ and $\gamma_- < \alpha$, then $\pP(\tau(u) < +\infty) \leq e^{-\gamma_- u}$.
\item If $p > \frac{\beta}{\alpha}$ and $\gamma_- \geq \alpha$, then $\pP(\tau(u) < +\infty) \leq e^{-\alpha u}$.
\end{itemize}
\end{prop}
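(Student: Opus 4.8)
The plan is to put the Laplace exponent of $X$ into closed form and then read off the regimes directly from Theorem \ref{theorem}. First I would identify the model: since $\Pi(\rR) = \int_{-\infty}^{0}\beta\alpha e^{\alpha x}\,dx = \beta < +\infty$, the jump part of $X$ is a compound Poisson process with arrival rate $\beta$ and jump law $\beta^{-1}\Pi(dx) = \alpha e^{\alpha x}\mathbbm{1}_{\{x\le 0\}}\,dx$, i.e.\ the law of $-U_1$ with $U_1$ exponential of rate $\alpha$; moreover $\int_{\rR}x\,\Pi(dx) = -\beta/\alpha$, so that $\delta = \eE(X_1) = a + \int_{|x|\ge 1}x\,\Pi(dx) = p - \beta/\alpha$. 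Substituting these facts into the L\'{e}vy--It\^{o} decomposition \eqref{li_int} and absorbing the constant $\beta/\alpha$ coming from the compensator into the drift term gives $X_t = pt + \sigma W_t - \sum_{n=1}^{N_t}U_n$, so that $Y^u = u + X$ is exactly the perturbed risk process \eqref{eq_mod2}.

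Next I would compute $\gamma_c$ and $\Psi$ explicitly. Since $\int_{-\infty}^{-1}e^{-\gamma x}\Pi(dx) = \beta\alpha\int_{-\infty}^{-1}e^{(\alpha-\gamma)x}\,dx$ is finite exactly when $\gamma < \alpha$, we have $\gamma_c = \alpha$. Evaluating the elementary integrals $\int_{-\infty}^{0}e^{(\alpha-\gamma)x}\,dx$, $\int_{-\infty}^{0}e^{\alpha x}\,dx$ and $\int_{-\infty}^{0}x\,e^{\alpha x}\,dx$ and substituting into the defining formula for $\Psi$ gives, for $\gamma\in[0,\alpha)$,
\[
\Psi(\gamma) \;=\; -p\gamma + \frac{\sigma^{2}}{2}\gamma^{2} + \frac{\beta\gamma}{\alpha-\gamma}.
\]
From this closed form I would record the features that drive the classification: $\Psi(0)=0$; $\Psi''(\gamma) = \sigma^{2} + 2\beta\alpha(\alpha-\gamma)^{-3} > 0$ on $[0,\alpha)$, so $\Psi$ is strictly convex there; $\Psi(\gamma)\to+\infty$ as $\gamma\uparrow\alpha$; and $\Psi'(0) = -p + \beta/\alpha = -\delta$.

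The case split is then immediate from convexity. If $p\le\beta/\alpha$, then $\Psi'(0)\ge 0$, and strict convexity together with $\Psi(0)=0$ forces $\Psi(\gamma)>0$ for all $\gamma\in(0,\alpha)$; this is Case (A) of Theorem \ref{theorem}, so $\pP(\tau(u)<+\infty) = 1$. If $p>\beta/\alpha$, then $\Psi'(0)<0$ whereas $\Psi(\gamma)\to+\infty$, so by strict convexity $\Psi$ has a unique zero $\gamma_{0}\in(0,\alpha)$. To locate it, divide the equation $\Psi(\gamma)=0$ by $\gamma$ and multiply by $2(\alpha-\gamma)$: for $\gamma\in(0,\alpha)$ this is equivalent to $Q(\gamma) := \sigma^{2}\gamma^{2} - (2p+\sigma^{2}\alpha)\gamma + 2(p\alpha-\beta) = 0$, a quadratic with discriminant $\Delta$ and roots $\frac{\sigma^{2}\alpha + 2p \pm \sqrt{\Delta}}{2\sigma^{2}}$. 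Since $Q(0) = 2(p\alpha-\beta) > 0$ and $Q(\alpha) = -2\beta < 0$, the quadratic $Q$ has exactly one root in $(0,\alpha)$, namely the smaller root $\gamma_{-}$; hence $\gamma_{0} = \gamma_{-}$, and Case (B) of Theorem \ref{theorem} yields $\pP(\tau(u)<+\infty)\le e^{-\gamma_{-}u}$. The same computation $Q(\alpha) = -2\beta < 0$ also shows that $\gamma_{-} < \alpha$ always holds under the standing hypotheses $\sigma^{2},\beta > 0$, so the remaining regime $\gamma_{-}\ge\alpha$ does not actually arise for this model; it is recorded only to match the structure of Theorem \ref{theorem}, where it would correspond to Case (D) and the bound $e^{-\gamma_{c}u} = e^{-\alpha u}$.

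The only real obstacle is bookkeeping rather than ideas: one must get the constant right when the compensator term is folded into the drift in the first step, and be careful in the last step to clear the denominator in $\Psi(\gamma)=0$ without gaining or losing the spurious values $\gamma = 0$ and $\gamma = \alpha$. Once $\Psi$ is in closed form, the rest is an elementary convexity argument followed by a direct appeal to Theorem \ref{theorem}.
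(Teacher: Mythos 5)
Your proof is correct and follows essentially the same route as the paper's: the same closed form for the Laplace exponent (your $-p\gamma+\tfrac{\sigma^2}{2}\gamma^2+\beta\gamma/(\alpha-\gamma)$ is identical to the paper's $-p\gamma+\tfrac{\sigma^2}{2}\gamma^2-\beta\alpha/(\gamma-\alpha)-\beta$), the same reduction of $\Psi(\gamma)=0$ on $(0,\alpha)$ to the quadratic $Q=B$ with discriminant $\Delta$, and the same appeal to Cases (A) and (B) of Theorem \ref{theorem}. Where you genuinely diverge is the observation $Q(\alpha)=-2\beta<0$, and you are right: since $Q$ is an upward parabola, $\alpha$ lies strictly between its roots, so $\gamma_-<\alpha$ whenever $\beta>0$. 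Equivalently, $\Psi(\gamma)\to+\infty$ as $\gamma\uparrow\gamma_c=\alpha$, so the convex function $\Psi$ with $\Psi(0)=0$ and $\Psi'(0^+)=-\delta<0$ must vanish somewhere in $(0,\alpha)$. The paper instead keeps ``$\gamma_-\geq\alpha$'' as a live alternative, draws it in the right panel of Figure 1, and presents it as the advertised illustration of Case (D); your computation shows that this alternative is empty, so the third bullet of the Proposition holds only vacuously and the perturbed Cram\'{e}r--Lundberg model with exponential claims never realises Case (D). This is consistent with the classical fact that exponential claims always admit a Lundberg adjustment coefficient under positive safety loading, with or without Brownian perturbation. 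Your version is therefore not just correct but sharper than the paper's; the only improvement I would suggest is to say explicitly that a genuine instance of Case (D) requires a L\'{e}vy measure whose exponential moment stays finite at $\gamma_c$ with $\Psi$ still negative there, which an exponential tail cannot provide.
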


\begin{proof}
We have $\gamma_c = \alpha$ and $\delta = p - \frac{\beta}{\alpha}$. So, by Theorem 1 (A), we have ruin with probability one when $p \leq \frac{\beta}{\alpha}$ and we assume in the following that $p > \frac{\beta}{\alpha}$. For $\gamma \in (0, \alpha)$, we obtain
\begin{equation*}
\begin{split}
\Psi(\gamma) & = -\delta \gamma + \frac{\sigma^2}{2}\gamma^2 + \beta \alpha \int_{-\infty}^{0}(e^{-\gamma x} - 1 + \gamma x)e^{\alpha x}dx \\
& = - p\gamma + \frac{\sigma^2}{2}\gamma^2 - \frac{\beta \alpha}{\gamma - \alpha} - \beta \\
& = \frac{\gamma\left(\sigma^2\gamma^2 - (\sigma^2\alpha + 2p)\gamma + 2(p\alpha - \beta)\right)}{2(\gamma - \alpha)} = -\frac{1}{2}A(\gamma)B(\gamma),
\end{split}
\end{equation*}

\noindent where $A(\gamma) \triangleq \frac{\gamma}{\alpha - \gamma}$ and $B(\gamma) \triangleq \sigma^2\gamma^2 - (\sigma^2\alpha + 2p)\gamma + 2(p\alpha - \beta)$. To see if $\Psi$ has an other root along $0$, we need to consider the solutions of $B(\gamma) = 0$. This is an equation of second order with determinant $\Delta$. As $\Delta > 0$, $B$ has two distinct roots $\gamma_+$ and $\gamma_-$, given by
$$\gamma_{\pm} = \frac{\sigma^2\alpha + 2p \pm \sqrt{\Delta}}{2\sigma^2}.$$

\begin{figure}[h]
\begin{tikzpicture}[scale=1]
    \coordinate (y) at (0,5);
    \coordinate (x) at (5,2);
    \draw[<-] (y) node[above] {$B(\gamma)$} -- (0,0);
    \draw[->] (-1,2) --  (x) node[right]{$\gamma$};

    \draw[important line] (0,4.5) .. controls (2.5,0) .. (5,4);
    \filldraw [black] 
     (3,2) circle (1pt) node[below left, black] {$\alpha$};
    \filldraw [black] 
     (3.7,2) circle (1pt) node[below right, black] {$\gamma_+$};
    \filldraw [black] 
     (1.465,2) circle (1pt) node[above right, black] {$\gamma_-$};

     \begin{scope}[xshift=7cm]
      \coordinate (y2) at (0,5);
      \coordinate (x2) at (5,2);
      \draw[<-] (y2) node[above] {$B(\gamma)$} -- (0,0);
      \draw[->] (-1, 2) --  (x2) node[right]{$\gamma$};
       \draw[important line] (0,4.5) .. controls (2.5,0) .. (5,4);
    \filldraw [black] 
     (1,2) circle (1pt) node[below left, black] {$\alpha$};
    \filldraw [black] 
     (3.7,2) circle (1pt) node[below right, black] {$\gamma_+$};
    \filldraw [black] 
     (1.465,2) circle (1pt) node[above right, black] {$\gamma_-$};
      \end{scope}
\end{tikzpicture}
\caption{Behaviour of $B$ when $\gamma_- < \alpha$ (left) and $\gamma_- \geq \alpha$  (right).}
\end{figure}
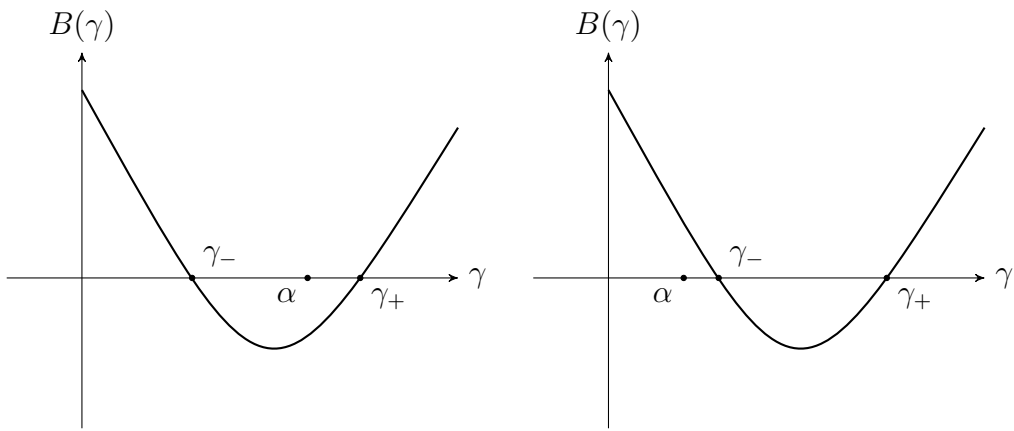

First note that $\gamma_- < \gamma_+$ and that $\gamma_+ \geq \alpha$ and $\gamma_- \geq 0$, because $(\sigma^2 \alpha + 2p)^2 \geq \Delta \geq (\sigma^2 \alpha - 2p)^2$. Additionally, note that $B''(\gamma) = 2\sigma^2 > 0$, so that $B$ is convex. Therefore, we only have two possible cases (see Figure 1) : either $\gamma_- < \alpha$ and then $\gamma_-$ is a root of $B$ and of $\Psi$, or $\gamma_- \geq \alpha$ and then $B(\gamma) > 0$ and $\Psi(\gamma) < 0$, for all $\gamma \in [0, \alpha)$. So, if $\gamma_- < \alpha$, then, by Theorem \ref{theorem} (B), we obtain $\pP(\tau(u) < +\infty) \leq e^{-\gamma_- u}$ and if $\gamma_- \geq \alpha$, then, by Theorem \ref{theorem} (D), we obtain $\pP(\tau(u) < +\infty) \leq e^{-\alpha u}$.
\end{proof}

\section{Proof of Theorem \ref{theorem}}

\subsection{Law of Large Numbers and Properties of the Laplace Exponent}

We start with the following well-known proposition and corollary (see Proposition IV.1.2, p.73 in \cite{asmussen2010} in the case of the compound Poisson process with drift, disscussion p.75 and Proposition 8 p.84 in \cite{bertoin1996}, Exercice 7.3 in \cite{kyp2014}, and Section 36 starting at p.245 in \cite{sato1999} in the general case) that give a strong law of large numbers and the tail behaviour for integrable L\'{e}vy processes. For completeness, we give an alternative proof which is not based on the random walk approximation.

\begin{prop}\label{p_aslimit}
Let $X = (X_t)_{t \geq 0}$ be real-valued L\'{e}vy process satisfying Assumption (I). Then, $\frac{X_t}{t} \overset{a.s.}{\to} \delta$, as $t \to +\infty$.
\end{prop}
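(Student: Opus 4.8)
The plan is to prove the strong law of large numbers $X_t/t \to \delta$ a.s. by first establishing convergence along the integer times $t = n \to \infty$ using a classical random-walk/Kolmogorov argument, and then controlling the fluctuation of $X$ between consecutive integers. Since $X$ has stationary and independent increments, the variables $Z_n \triangleq X_n - X_{n-1}$, $n \geq 1$, are i.i.d.\ with $\eE(Z_1) = \eE(X_1) = \delta$ (finite by Assumption (I)). The classical strong law of large numbers for i.i.d.\ integrable variables then gives $\frac{1}{n}\sum_{k=1}^n Z_k = \frac{X_n}{n} \to \delta$ a.s.\ as $n \to \infty$. (If one wants to avoid even citing the i.i.d.\ SLLN, one can invoke the $L^1$ or truncation argument, but since the problem only asks to avoid the random-walk \emph{approximation} of $X$ itself — i.e.\ approximating the whole path by a discretization — using the SLLN for the embedded increments is legitimate and presumably what the author intends by "alternative proof".)

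Next I would handle the interpolation. For $t \geq 1$ write $n = \lfloor t \rfloor$, so $n \leq t < n+1$ and
\[
\frac{X_t}{t} = \frac{X_n}{t} + \frac{X_t - X_n}{t} = \frac{n}{t}\cdot\frac{X_n}{n} + \frac{X_t - X_n}{t}.
\]
The first term converges to $\delta$ a.s.\ since $n/t \to 1$ and $X_n/n \to \delta$. It remains to show $\frac{1}{t}(X_t - X_n) \to 0$ a.s., for which it suffices (since $t \geq n$) to show $\frac{1}{n}M_n \to 0$ a.s., where $M_n \triangleq \sup_{n \leq s \leq n+1}|X_s - X_n|$. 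By stationarity and independence of increments, the $M_n$ are i.i.d.\ copies of $M_0 = \sup_{0 \leq s \leq 1}|X_s|$. A standard Borel–Cantelli argument shows that $\frac{1}{n}M_n \to 0$ a.s.\ provided $\eE(M_0) < \infty$ (indeed $\sum_n \pP(M_n > \varepsilon n) \leq \varepsilon^{-1}\eE(M_0) \sum_n \ldots$; more carefully, $\sum_n \pP(M_0 > \varepsilon n) < \infty \iff \eE(M_0/\varepsilon) < \infty$). So the crux is the integrability of the running supremum of $X$ over $[0,1]$.

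The main obstacle is therefore showing $\eE\big(\sup_{0 \leq s \leq 1}|X_s|\big) < +\infty$ under Assumption (I) alone. I would establish this from the L\'evy–It\^o decomposition (\ref{li_int}): $X_s = \delta s + \sigma W_s + \int_0^s\int_\rR x\,(\mu^X - \nu^X)(dr,dx)$. The deterministic drift contributes $|\delta|$; for the Brownian part, $\eE(\sup_{s \leq 1}|\sigma W_s|) < \infty$ by the reflection principle (or Doob's $L^2$ inequality). For the compensated-jump martingale $N_s \triangleq \int_0^s\int_\rR x\,(\mu^X-\nu^X)(dr,dx)$, I would split the jumps at size $1$: the large-jump part $\int_0^s\int_{|x|\geq 1}x\,\mu^X(dr,dx)$ minus its compensator $s\int_{|x|\geq 1}x\,\Pi(dx)$; the compensated large-jump martingale has running supremum bounded in expectation because $\eE$ of the total variation of the large jumps over $[0,1]$ is $\eE\big(\int_0^1\int_{|x|\geq 1}|x|\,\mu^X(dr,dx)\big) = \int_{|x|\geq 1}|x|\,\Pi(dx) < \infty$ precisely by Assumption (I). The small-jump part $\int_0^s\int_{|x|<1}x\,(\mu^X-\nu^X)(dr,dx)$ is a square-integrable martingale with $\eE(N_1^{\text{small}})^2 = \int_{|x|<1}x^2\,\Pi(dx) < \infty$, so Doob's inequality bounds $\eE(\sup_{s\leq 1}|N_s^{\text{small}}|)$. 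Combining these four bounds via the triangle inequality gives $\eE(\sup_{s\leq 1}|X_s|) < \infty$, completing the proof. An alternative, perhaps cleaner route to this integrability fact is to cite directly that an integrable L\'evy process has $\eE(\sup_{s \leq 1}|X_s|) < \infty$ (e.g.\ via \cite{sato1999}), but giving the decomposition argument keeps the proof self-contained as the author intends.
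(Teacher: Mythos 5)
Your proof is correct, but it is a genuinely different argument from the one in the paper --- in fact it is precisely the classical route that the author announces he is \emph{avoiding} (``we give an alternative proof which is not based on the random walk approximation''). You discretize at integer times, apply Kolmogorov's strong law to the i.i.d.\ increments $Z_n = X_n - X_{n-1}$ to get $X_n/n \to \delta$, and then control the oscillation between integers via a Borel--Cantelli argument resting on $\eE\bigl(\sup_{0 \leq s \leq 1}|X_s|\bigr) < +\infty$, which you correctly derive from the L\'{e}vy--It\^{o} decomposition: the Brownian supremum, Doob's $L^2$ inequality for the compensated small-jump martingale, and Assumption (I) for the large-jump part and its compensator. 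All of these steps are sound (the $M_n$ are indeed i.i.d.\ copies of $\sup_{s \leq 1}|X_s|$ since they are functionals of increments over disjoint intervals, and $\sum_n \pP(M_0 > \varepsilon n) < +\infty$ is equivalent to integrability of $M_0$). The paper instead works entirely in continuous time: it splits the martingale part of the L\'{e}vy--It\^{o} decomposition into the compensated small-jump martingale --- handled by a continuous-time martingale strong law from Liptser--Shiryaev via finiteness of a suitable compensator $\tilde{B}_\infty$ --- and the large-jump part, which it represents as a compound Poisson process and treats with the classical SLLN for its jump sizes together with $N_t/t \to \Pi(|x|\geq 1)$. Your version is the standard textbook proof (essentially the one in Sato or Bertoin that the paper cites) and is arguably more elementary in its ingredients, at the cost of the supremum estimate over $[0,1]$; the paper's version trades that estimate for a ready-made martingale convergence theorem and avoids discretizing the path altogether. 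Both are complete proofs of the proposition.
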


\begin{proof}
Using the L\'{e}vy-It\^{o} decomposition (\ref{li_int}), we obtain
$$\frac{X_t}{t} = \delta + \sigma \frac{W_t}{t} + \frac{1}{t} \int_{0}^t\int_{\rR}x \left(\mu^X-\nu^X\right)(ds, dx), \text{ for all } t > 0.$$

\noindent But, $\frac{W_t}{t} \overset{a.s.}{\to} 0$. Now let $M \triangleq \int_{0}^.\int_{\rR}x \left(\mu^X-\nu^X\right)(ds, dx)$. We will show that $\frac{M_t}{t} \overset{a.s.}{\to} 0$. Note that 
\begin{equation*}
\begin{split}
M_t & = M^{(1)}_t + M^{(2)}_t + M^{(3)}_t \\
& \triangleq \int_{0}^t\int_{|x|< 1}x \left(\mu^X-\nu^X\right)(ds, dx) + \int_{0}^t\int_{|x|\geq 1}x \mu^X(ds, dx) \\
& - \int_{0}^t\int_{|x|\geq 1}x ds \Pi(dx).
\end{split}
\end{equation*}
\noindent Let's prove first that $\frac{M^{(1)}_t}{t} \overset{a.s.}{\to} 0$. By Theorem 9, p.142 in \cite{lipshir1989} it is enough to show that $\tilde{B}_{\infty} < +\infty$ a.s., where $\tilde{B}$ is the compensator of of the process $(B_t)_{t \geq 0}$ defined by
$$B_t = \sum_{0 \leq s < t}\frac{(\Delta M^{(1)}_s/(1+s))^2}{1 + |\Delta M^{(1)}_s/(1+s)|}, \text{ for all } t \geq 0,$$
\noindent where $\Delta M^{(1)}_s$ is the jump of $M^{(1)}$ at $s \geq 0$. But, by Theorem 1, p.176 in \cite{lipshir1989}, and using the fact that $\nu^X(\{s\},dx) = \lambda(\{s\})\Pi(dx) = 0$, because $\lambda$ is the Lebesgue measure, we obtain $\Delta M^{(1)}_s = \Delta X_s \mathbbm{1}_{\{|\Delta X_s| < 1\}}$. Next, note that
$$B_t = \sum_{0 \leq s < t}\frac{(\Delta X_s)^2 \mathbbm{1}_{\{|\Delta X_s| < 1\}}/(1+s)}{1 + s + |\Delta X_s \mathbbm{1}_{\{|\Delta X_s| < 1\}}|} = \int_0^t\int_{\rR}\frac{x^2 \mathbbm{1}_{\{|x| < 1\}}/(1+s)}{1 + s + |x \mathbbm{1}_{\{|x| < 1\}}|}\mu^X(ds, dx).$$

\noindent Therefore, $\tilde{B}$ satisfies
\begin{equation*}
\begin{split}
\tilde{B}_t = \int_0^t\int_{\rR}\frac{x^2 \mathbbm{1}_{\{|x| < 1\}}/(1+s)}{1 + s + |x \mathbbm{1}_{\{|x| < 1\}}|}ds\Pi(dx) \leq \left(\int_0^t\frac{1}{(1+s)^2}ds\right)\left(\int_{|x| < 1}x^2\Pi(dx)\right) \\
\leq \left(\int_0^{\infty}\frac{1}{(1+s)^2}ds\right)\left(\int_{|x| < 1}x^2\Pi(dx)\right) = \int_{|x| < 1}x^2\Pi(dx) < +\infty
\end{split}
\end{equation*}

\noindent for all $t \geq 0$, where the last integral is finite because $\Pi$ is a L\'{e}vy measure. So, $\tilde{B}_{\infty} < +\infty$ a.s. and, if $\Pi(|x| \geq 1) = 0$, we are finished. Therefore, without loss of generality, we suppose that $\Pi(|x| \geq 1) > 0$. Note that $\frac{M^{(3)}_t}{t} = -\int_{|x| \geq 1}x\Pi(dx)$, for all $t \geq 0$, so we need to show that $\frac{M^{(2)}_t}{t} \overset{a.s.}{\to} \int_{|x| \geq 1}x\Pi(dx)$.

It is well known that the jump measure $\mu^X$ is a Poisson random measure with intensity $\lambda \times \Pi$, where $\lambda$ is the Lebesgue measure. Then, by Lemma 2.8, p.46-47 in \cite{kyp2014}, $M^{(2)}$ can be represented as a compound Poisson process with rate $\Pi(|x| \geq 1)$ and jump distribution $\Pi(|x| \geq 1)^{-1}\Pi(dx)|_{\{|x| \geq 1\}}$ (where $\Pi(dx)|_{\{|x| \geq 1\}}$ is the restriction of the measure $\Pi$ to the set ${\{|x| \geq 1\}}$). More precisely,
$$M^{(2)}_t = \sum_{i = 1}^{N_t}Y_i, \text{ for all } t \geq 0,$$
\noindent where $(N_t)_{t \geq 0}$ is a Poisson process with rate $\Pi(|x| \geq 1)$ and $(Y_i)_{i \in \nN}$ is a sequence of i.i.d. random variables, which is independent from $N$ and with distribution $\Pi(|x| \geq 1)^{-1}\Pi(dx)|_{\{|x| \geq 1\}}$. Conditioning on $N_t$, using the strong law of large numbers and noting that $N_t \overset{a.s.}{\to} +\infty$, we obtain
$$\frac{M^{(2)}_t}{N_t} = \frac{1}{N_t}\sum_{i = 1}^{N_t}Y_i \overset{a.s.}{\to} \eE(Y_1) = \Pi(|x| \geq 1)^{-1}\int_{|x| \geq 1}x\Pi(dx).$$
\noindent Finally, using the fact that $\frac{N_t}{t} \overset{a.s.}{\to} \Pi(|x| \geq 1)$, we obtain 
$$\frac{M^{(2)}_t}{t} = \frac{N_t}{t}\frac{M^{(2)}_t}{N_t} \overset{a.s.}{\to} \int_{|x| \geq 1}x\Pi(dx).$$
\end{proof}

\begin{coro}\label{cor_plimit}
Let $X = (X_t)_{t \geq 0}$ be a (non-zero) real-valued L\'{e}vy process satisfying Assumption (I).
\begin{enumerate}
\item If $\delta > 0$, then $\lim_{t \to +\infty} X_t \overset{a.s}{=} +\infty$.
\item If $\delta < 0$, then $\lim_{t \to +\infty} X_t \overset{a.s}{=} -\infty$.
\item If $\delta = 0$, then $\liminf_{t \to +\infty} X_t \overset{a.s.}{=} -\infty$ and $\limsup_{t \to +\infty} X_t \overset{a.s.}{=} +\infty$.
\end{enumerate}
\end{coro}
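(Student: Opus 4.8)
The plan is to deduce all three statements from Proposition~\ref{p_aslimit}. Cases (1) and (2) are immediate: if $\delta>0$, Proposition~\ref{p_aslimit} gives $X_t/t\ge\delta/2$ for all $t$ beyond an a.s. finite time, hence $X_t\to+\infty$ a.s.; case (2) follows by applying case (1) to the L\'evy process $-X$, which satisfies Assumption (I) with mean $-\delta$.

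Case (3) is the substantive one: now $X_t/t\to 0$ a.s., which by itself controls neither $\limsup$ nor $\liminf$, so a genuine fluctuation argument is needed. First I would carry out two soft reductions. By the triviality of the tail $\sigma$-field of $X$ (Kolmogorov's $0$--$1$ law for the independent increments), the tail events $\{\limsup_{t\to\infty}X_t=+\infty\}$ and $\{\limsup_{t\to\infty}X_t=-\infty\}$ — note e.g. $\{\limsup_{t\to\infty}X_t=+\infty\}=\bigcap_n\{\sup_{s\ge0}(X_{n+s}-X_n)=+\infty\}$ — and their $\liminf$ analogues each have probability $0$ or $1$. The remaining possibility, that $\limsup_{t\to\infty}X_t$ is a.s. finite, is excluded by a shift argument: writing $Z=\limsup_{t\to\infty}X_t$, for each fixed $s$ one has $Z=X_s+W_s$ with $W_s:=\limsup_{t\to\infty}(X_{s+t}-X_s)$ independent of $X_s$ and $W_s\overset{d}{=}Z$ (since $(X_{s+t}-X_s)_{t\ge0}\overset{d}{=}(X_t)_{t\ge0}$), so comparing characteristic functions forces $X_s=0$ a.s. for every $s$, i.e. $X\equiv0$ by right-continuity, contradicting that $X$ is non-zero. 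Applying the same to $-X$, we conclude $\limsup_{t\to\infty}X_t\in\{-\infty,+\infty\}$ and $\liminf_{t\to\infty}X_t\in\{-\infty,+\infty\}$ a.s., and it only remains to rule out $X_t\to-\infty$ a.s. (which would force $\limsup_{t\to\infty}X_t=+\infty$) and, by symmetry under $X\mapsto-X$, $X_t\to+\infty$ a.s. (which would force $\liminf_{t\to\infty}X_t=-\infty$).

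Excluding $X_t\to\pm\infty$ is the crux, and the only point where $\delta=0$ (not merely $X_t=o(t)$) is genuinely used; I expect this to be the main obstacle. Soft arguments do not appear to suffice here: when $\delta=0$ the process $(X_t)$ is a martingale, but a concave transform of it is a supermartingale which, if bounded below, converges to a finite limit — perfectly consistent with oscillation — while path-duality identities only relate $\limsup$ to $\liminf$. The clean route, which I would take, is to invoke the classical theorem that a non-zero integrable L\'evy process with zero mean oscillates: equivalently, the embedded random walk $(X_n)_{n\ge0}$ has i.i.d., centred, integrable increments, hence is recurrent by the Chung--Fuchs theorem, so $\limsup_n X_n=+\infty$ and $\liminf_n X_n=-\infty$ a.s.; \emph{a fortiori} $\limsup_{t\to\infty}X_t=+\infty$ and $\liminf_{t\to\infty}X_t=-\infty$ a.s. (see Section 36 of \cite{sato1999}, Chapter VI of \cite{bertoin1996}, or Section 7 of \cite{kyp2014}). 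This completes case (3).
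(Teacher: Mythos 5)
Your proof is correct and follows essentially the same route as the paper: cases (1) and (2) are read off from Proposition \ref{p_aslimit}, and case (3) is reduced to the classical recurrence/oscillation result for centred integrable increments --- the paper cites the L\'{e}vy-process versions (Theorem 36.7 and Proposition 37.10 in \cite{sato1999}), while you invoke Chung--Fuchs for the embedded random walk $(X_n)_{n \geq 0}$, which amounts to the same thing. The only remark is that your $0$--$1$-law and shift argument ruling out a finite $\limsup$ is correct but redundant, since the Chung--Fuchs step you end with already yields $\limsup_n X_n = +\infty$ and $\liminf_n X_n = -\infty$ directly.
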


\begin{proof}
The assertions 1 and 2 follow directly from Proposition \ref{p_aslimit}. For assertion 3, note that the condition $\delta = \eE(X_1) = 0$ implies, by Theorem 36.7, p.248 in \cite{sato1999}, that $X$ is recurrent. This means that we have neither $\lim_{t \to +\infty} X_t \overset{a.s}{=} +\infty$, nor $\lim_{t \to +\infty} X_t \overset{a.s}{=} -\infty$. Therefore, by Proposition 37.10, p.255 in \cite{sato1999}, $\liminf_{t \to +\infty} X_t \overset{a.s.}{=} -\infty$ and $\limsup_{t \to +\infty} X_t \overset{a.s.}{=} +\infty$.

%
\end{proof}

Next, the following proposition gives the basic properties of the Laplace exponent (see Lemma 26.4, p.169 in \cite{sato1999}).

\begin{prop}\label{prop_lapl}
Let $X = (X_t)_{t \geq 0}$ be a (non-zero) real-valued L\'{e}vy process satisfying Assumptions (I) and (II) and $\Psi : [0, \gamma_c) \to \rR$ the Laplace exponent of $X$. Then,
\begin{enumerate}
\item $\Psi$ is convex and starting from $0$ and
\item $\Psi$ is of class $C^{\infty}$ on $(0, \gamma_c)$ and its derivative $\Psi'$ is non-decreasing and given by
\begin{equation}\label{eqpsiprime}
\Psi'(\gamma) = -\delta + \sigma^2 \gamma + \int_{\rR}x\left(1-e^{-\gamma x}\right)\Pi(dx), \text{ for all } \gamma \in (0, \gamma_c).
\end{equation}
\end{enumerate}
\end{prop}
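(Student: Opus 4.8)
The plan is to establish three things in turn: that $\Psi(0)=0$, that $\Psi$ is $C^\infty$ on $(0,\gamma_c)$ with the stated derivative, and that $\Psi'$ is non-decreasing (whence $\Psi$ is convex). The first point is immediate: substituting $\gamma=0$ into the definition of $\Psi$ kills the drift and Gaussian terms, and the integrand $e^{0}-1+0$ vanishes identically, so $\Psi(0)=0$.

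For the smoothness and the formula, fix an interior point $\gamma_\ast\in(0,\gamma_c)$ and choose $c,d,d'$ with $0<c<\gamma_\ast<d<d'<\gamma_c$. By the classical theorem on differentiation under the integral sign, it is enough to exhibit, for every $n\geq1$, a $\Pi$-integrable function dominating $\gamma\mapsto\partial_\gamma^n\bigl(e^{-\gamma x}-1+\gamma x\bigr)$ uniformly for $\gamma\in[c,d]$; iterating then gives $\Psi\in C^\infty$ on $(0,\gamma_c)$ together with $\Psi'(\gamma)=-\delta+\sigma^2\gamma+\int_\rR x\bigl(1-e^{-\gamma x}\bigr)\Pi(dx)$. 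For $n\geq2$ the derivative equals $(-x)^n e^{-\gamma x}$, and I would split the integral over $\{|x|<1\}$, $[1,+\infty)$ and $(-\infty,-1]$. On $\{|x|<1\}$ one has $|x|^n e^{-\gamma x}\leq e^{d}x^2$ using $n\geq2$, which is $\Pi$-integrable since $\Pi$ is a L\'evy measure; on $[1,+\infty)$, $|x|^n e^{-\gamma x}\leq|x|^n e^{-cx}$ is bounded because $c>0$, and $\Pi([1,+\infty))<+\infty$; on $(-\infty,-1]$, $|x|^n e^{-\gamma x}\leq|x|^n e^{d|x|}\leq C_n e^{-d'x}$ for $|x|\geq1$, and $\int_{-\infty}^{-1}e^{-d'x}\Pi(dx)<+\infty$ by the definition of $\gamma_c$ since $d'<\gamma_c$. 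The case $n=1$, where the derivative is $x(1-e^{-\gamma x})$, is handled the same way after noting $|x(1-e^{-\gamma x})|\leq e^{d}x^2$ for $|x|<1$ (mean value theorem) and $|x(1-e^{-\gamma x})|\leq|x|+|x|e^{-\gamma x}$ elsewhere, which is bounded by $2|x|$ on $[1,+\infty)$ and by $Ce^{-d'x}$ on $(-\infty,-1]$.

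For the monotonicity of $\Psi'$, one more differentiation (legitimate by the $n=2$ estimate) gives $\Psi''(\gamma)=\sigma^2+\int_\rR x^2 e^{-\gamma x}\Pi(dx)\geq0$ on $(0,\gamma_c)$, so $\Psi'$ is non-decreasing there; together with the continuity of $\Psi$ on $[0,\gamma_c)$ (which follows from dominated convergence at the endpoint) this yields convexity of $\Psi$ on the whole interval. Alternatively, convexity is a direct consequence of H\"older's inequality applied to $\eE(e^{-\gamma X_1})=e^{\Psi(\gamma)}$, which is finite on $(-\infty,\gamma_c)$ as shown in the preceding discussion.

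The main obstacle is the uniform domination near $-\infty$: repeated differentiation produces a polynomial factor $|x|^n$, and one must absorb it into a slightly larger exponential weight $e^{-d'x}$ that is still $\Pi$-integrable on $(-\infty,-1]$. This is precisely where Assumption (II) is used, and keeping the evaluation point strictly inside $[0,\gamma_c)$ — so that room $d<d'<\gamma_c$ is available — is what makes all the derivatives, not merely the first, well defined.
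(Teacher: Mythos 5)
Your proof is correct. Note that the paper does not actually prove this proposition: it simply refers the reader to Lemma 26.4, p.169 in Sato's book, so there is no internal argument to compare against. Your write-up supplies the standard direct proof that such a reference stands in for: $\Psi(0)=0$ by inspection, smoothness and the formula for $\Psi'$ by differentiation under the integral sign with local-uniform domination, and convexity from $\Psi''\geq 0$. The key technical point --- absorbing the polynomial factor $|x|^n$ on $(-\infty,-1]$ into a slightly larger exponential weight $e^{-d'x}$ with $d<d'<\gamma_c$, which is exactly where Assumption (II) enters --- is handled correctly, and the splittings on $\{|x|<1\}$, $[1,+\infty)$ and $(-\infty,-1]$ mirror the estimates the paper itself uses when checking that $\Psi$ is well defined. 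Two small remarks: on $\{|x|<1\}$ for $n=1$ the mean value theorem gives $|x(1-e^{-\gamma x})|\leq d\,e^{d}x^{2}$ rather than $e^{d}x^{2}$, a harmless constant; and your alternative convexity argument via H\"older's inequality applied to $\eE\left(e^{-\gamma X_1}\right)=e^{\Psi(\gamma)}$ is actually the cleaner route to convexity on all of $[0,\gamma_c)$, since it avoids having to discuss continuity at the left endpoint separately.
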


The convexity of the Laplace exponent then implies that there are only four possible cases which are illustrated in Figure 2 and reflect the possible cases for the behaviour of the ruin probability.

\begin{figure}[h]
\begin{center}
   \begin{tikzpicture}[scale=1]
    \coordinate (y) at (0,5);
    \coordinate (x) at (5,2);
    \draw[<-] (y) node[above] {$\Psi(\gamma)$} -- (0,0);
    \draw[->] (-1,2) --  (x) node[right]{$\gamma$};

    \draw[important line] (0,2) .. controls (1,1.2) and (4,3) .. (5,4) node[right] {$(B)$};
    \draw[important line] (0,2) .. controls (1,2) and (4,3) .. (5,4.5) node[right] {$(A)$};
    \draw[important line] (0,2) .. controls (1,1) and (3,0) .. (5,0) node[right] {$(C)$};
    \filldraw [black] 
     (1.73,2) circle (1pt) node[below right, black] {$\gamma_0$};

     \begin{scope}[xshift=7cm]
    \draw[<-] (0,5) node[above] {$\Psi(\gamma)$} -- (0,0);
    \draw[->] (-1,2) --  (5,2) node[right]{$\gamma$};
    \draw[dashed] (4,0) --  (4,5);

    \draw[important line] (0,2) .. controls (1,1.2) and (4,2) .. (4,4) node[right] {$(B)$};
    \draw[important line] (0,2) .. controls (1,2) and (3,2) .. (4,4.5) node[right] {$(A)$};
    \draw[important line] (0,2) .. controls (1,1) and (3,0) .. (4,0.5) node[right] {$(D)$};
    \filldraw [black] 
     (4,2) circle (1pt) node[below right, black] {$\gamma_c$};
     \filldraw [black] 
     (2.5,2) circle (1pt) node[below right, black] {$\gamma_0$};
      \end{scope}
\end{tikzpicture}
   \caption{Possible behaviours of the Laplace exponent $\Psi$ when $\gamma_c = +\infty$ (left) and $\gamma_c < +\infty$ (right).}
\end{center}
\end{figure}
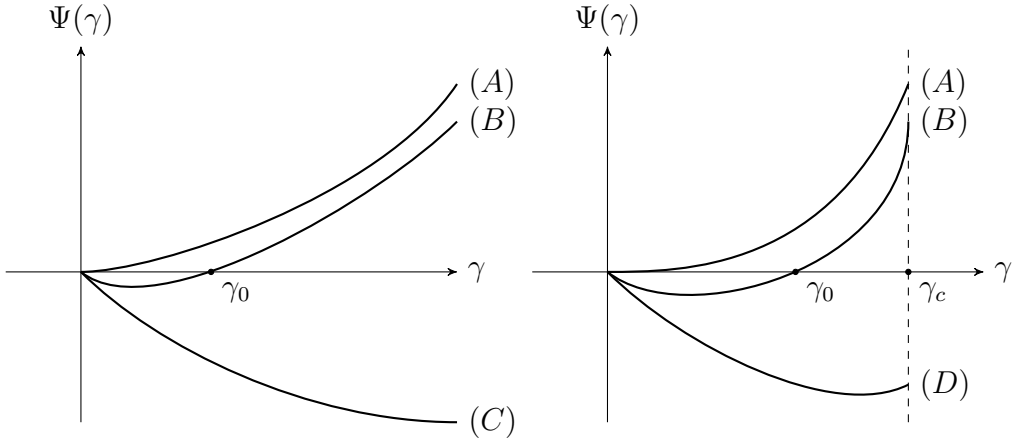

\subsection{The Martingale Method in Ruin Theory and the Proof}

In this final section, we recall the martingale method in ruin theory and apply it to prove Theorem \ref{theorem}. For the proof of the following well-known martingale method see e.g. Proposition II.3.1, p.29 in \cite{asmussen2010}.

\begin{prop}\label{thmexpasmp}
Let $X = (X_t)_{t \geq 0}$ be a real-valued L\'{e}vy process. Suppose that
\begin{enumerate}[(i)]
\item there exists $\gamma_0 > 0$, such that $(e^{-\gamma_0 X_t})_{t \geq 0}$ is a martingale,
\item $X_t \overset{a.s.}{\to} +\infty$ as $t \to +\infty$ on the set $\{\tau(u) = +\infty\}$. 
\end{enumerate}

\noindent Then, for all $u \geq 0$, $\pP\left(\tau(u) < +\infty\right) = C(u)e^{-\gamma_0 u} \leq e^{-\gamma_0 u}$, where
\begin{equation*}
C(u) \triangleq \frac{1}{\expect{e^{\gamma_0 \xi(u)} | \tau(u) < +\infty}},
\end{equation*}
\noindent and $\xi(u) \triangleq - u - X_{\tau(u)}$.
\end{prop}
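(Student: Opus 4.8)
\textbf{Plan for proving Proposition \ref{thmexpasmp}.}
The plan is to apply optional stopping to the nonnegative martingale $(e^{-\gamma_0 X_t})_{t \geq 0}$ at the bounded stopping time $\tau(u) \wedge t$, then pass to the limit $t \to +\infty$. First I would fix $u \geq 0$ and note that, since $\tau(u) \wedge t$ is a bounded stopping time, the optional stopping theorem together with assumption (i) gives
\begin{equation}\label{eq_os}
1 = \eE\left(e^{-\gamma_0 X_0}\right) = \eE\left(e^{-\gamma_0 X_{\tau(u) \wedge t}}\right), \text{ for all } t \geq 0.
\end{equation}
Here I use that $X_0 = 0$ so $e^{-\gamma_0 X_0} = 1$; the ruin set is $\{\tau(u) < +\infty\}$ and on it the overshoot variable $\xi(u) = -u - X_{\tau(u)}$ records how far below $-u$ the process lands, so that $X_{\tau(u)} = -u - \xi(u)$ and $e^{-\gamma_0 X_{\tau(u)}} = e^{\gamma_0 u} e^{\gamma_0 \xi(u)}$.

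Next I would split the expectation in (\ref{eq_os}) over the disjoint events $\{\tau(u) \leq t\}$ and $\{\tau(u) > t\}$, writing
\begin{equation*}
1 = \eE\left(e^{-\gamma_0 X_{\tau(u)}} \mathbbm{1}_{\{\tau(u) \leq t\}}\right) + \eE\left(e^{-\gamma_0 X_t}\mathbbm{1}_{\{\tau(u) > t\}}\right),
\end{equation*}
and then let $t \to +\infty$. For the first term, on $\{\tau(u) \leq t\}$ the integrand is $e^{\gamma_0 u} e^{\gamma_0 \xi(u)}$ which does not depend on $t$, and since $\gamma_0 > 0$ and $\xi(u) \geq 0$ it is bounded below by $e^{\gamma_0 u}$; monotone convergence as the events increase to $\{\tau(u) < +\infty\}$ gives the limit $e^{\gamma_0 u}\, \eE\left(e^{\gamma_0 \xi(u)} \mathbbm{1}_{\{\tau(u) < +\infty\}}\right)$. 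The crux is to show the second term vanishes in the limit. On the event $\{\tau(u) > t\}$ one has, for every finite $t$, that the process has not yet gone below $-u$; the relevant part of this set as $t \to +\infty$ is $\{\tau(u) = +\infty\}$, and by assumption (ii) on this set $X_t \to +\infty$, so $e^{-\gamma_0 X_t} \to 0$ pointwise. Combined with the uniform bound $e^{-\gamma_0 X_t} \mathbbm{1}_{\{\tau(u) > t\}} \leq e^{\gamma_0 u}$ (since $X_t > -u$ there), dominated convergence forces the second term to $0$.

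Passing to the limit therefore yields the identity $1 = e^{\gamma_0 u}\, \eE\left(e^{\gamma_0 \xi(u)} \mathbbm{1}_{\{\tau(u) < +\infty\}}\right)$, which I would rewrite by factoring out the ruin probability: conditioning on $\{\tau(u) < +\infty\}$ gives $\eE\left(e^{\gamma_0 \xi(u)} \mathbbm{1}_{\{\tau(u) < +\infty\}}\right) = \pP(\tau(u) < +\infty)\, \expect{e^{\gamma_0 \xi(u)} | \tau(u) < +\infty}$. Solving for the ruin probability produces exactly $\pP(\tau(u) < +\infty) = C(u) e^{-\gamma_0 u}$ with $C(u)$ as defined in the statement. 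The bound $C(u) \leq 1$, and hence $\pP(\tau(u) < +\infty) \leq e^{-\gamma_0 u}$, follows because $\xi(u) \geq 0$ and $\gamma_0 > 0$ imply $e^{\gamma_0 \xi(u)} \geq 1$, so its conditional expectation is at least $1$ and its reciprocal $C(u)$ is at most $1$. The main obstacle I anticipate is the justification that the second term vanishes: one must argue carefully that the limiting contribution of $\{\tau(u) > t\}$ comes entirely from $\{\tau(u) = +\infty\}$ (controlling the part where ruin occurs after time $t$), and that assumption (ii), which is phrased precisely on this set, supplies the pointwise decay needed before invoking dominated convergence with the domination constant $e^{\gamma_0 u}$.
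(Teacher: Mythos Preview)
Your proposal is correct and is precisely the standard optional-stopping argument. In fact the paper does not give its own proof of this proposition at all: it simply refers the reader to Proposition II.3.1, p.29 in Asmussen's book \cite{asmussen2010}, where the proof follows exactly the route you outline (optional stopping at $\tau(u)\wedge t$, splitting the expectation, and passing to the limit using the uniform bound $e^{-\gamma_0 X_t}\mathbbm{1}_{\{\tau(u)>t\}}\le e^{\gamma_0 u}$ together with assumption (ii)). The obstacle you flag at the end is not a genuine difficulty: on $\{\tau(u)<+\infty\}$ the indicator $\mathbbm{1}_{\{\tau(u)>t\}}$ is eventually zero, so the product $e^{-\gamma_0 X_t}\mathbbm{1}_{\{\tau(u)>t\}}$ converges to $0$ pointwise on the whole space, and dominated convergence applies directly.
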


%
%
%

\begin{rmk}
As noted in \cite{asmussen2010}, p.339, it is hard to obtain an explicit expression for $C(u)$. However, in some cases, it is possible to compute $C(u)$. For example, if $X$ has no negative jumps then $C(u) = 1$, and if the jumps are bounded or exponential, it is possible to compute the constant explicitly, see e.g. Section 6c in \cite{asmussen2010}. There are also asymptotic expressions for $C(u)$ as $u \to +\infty$, see e.g. Corollary XI.2.7 p.339 in \cite{asmussen2010} and Section 7.2. in \cite{kyp2014}. As we concentrate on the rate of decay of the probability of ruin in the general case, we will set $C(u) = 1$ and keep in mind that more precise results can be obtained for specific models or asymptotics.
\end{rmk}

The following proposition now gives a simple sufficient condition for (i) in Proposition \ref{thmexpasmp} in terms of the  Laplace exponent.

\begin{prop}\label{proppsiroot}
Let $X = (X_t)_{t \geq 0}$ be a real-valued L\'{e}vy process satisfying Assumptions (I) and (II) and $\Psi : [0, \gamma_c) \to \rR$ be the Laplace exponent of $X$. Suppose there exists $\gamma_0 \in (0, \gamma_c)$ such that $\Psi(\gamma_0) = 0$. Then, $(e^{-\gamma_0 X_t})_{t \geq 0}$ is a martingale.
\end{prop}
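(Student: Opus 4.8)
The plan is to show that $(e^{-\gamma_0 X_t})_{t \geq 0}$ is a martingale by combining the stationary independent increments property of $X$ with the integrability of $e^{-\gamma_0 X_t}$ that comes from $\gamma_0 \in (0, \gamma_c)$. First I would note that since $0 < \gamma_0 < \gamma_c$, the Laplace transform computation recalled in the excerpt gives $\eE(e^{-\gamma_0 X_t}) = e^{t \Psi(\gamma_0)}$ for all $t \geq 0$; in particular $e^{-\gamma_0 X_t} \in L^1$ for every $t$, so the process is well-defined as an integrable process. Adaptedness to $\fF$ is immediate since $X$ is $\fF$-adapted and $x \mapsto e^{-\gamma_0 x}$ is measurable.

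Next I would establish the martingale identity. For $0 \leq s \leq t$, write $e^{-\gamma_0 X_t} = e^{-\gamma_0 X_s} e^{-\gamma_0 (X_t - X_s)}$. By the L\'{e}vy property, the increment $X_t - X_s$ is independent of $\mathcal{F}_s$ and has the same law as $X_{t-s}$. Hence
\begin{equation*}
\eE\left(e^{-\gamma_0 X_t} \mid \mathcal{F}_s\right) = e^{-\gamma_0 X_s}\, \eE\left(e^{-\gamma_0 (X_t - X_s)} \mid \mathcal{F}_s\right) = e^{-\gamma_0 X_s}\, \eE\left(e^{-\gamma_0 X_{t-s}}\right) = e^{-\gamma_0 X_s}\, e^{(t-s)\Psi(\gamma_0)}.
\end{equation*}
Since $\Psi(\gamma_0) = 0$ by hypothesis, the last factor is $1$, and we conclude $\eE(e^{-\gamma_0 X_t} \mid \mathcal{F}_s) = e^{-\gamma_0 X_s}$, which is the martingale property.

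The only slightly delicate point — and the one I would be most careful about — is justifying that $\eE(e^{-\gamma_0 X_{t-s}})$ is genuinely finite, i.e. that we are allowed to pull the deterministic factor out of the conditional expectation and use the Laplace transform formula. This is exactly where Assumptions (I) and (II) enter: the discussion preceding the Remark on the Laplace exponent in the excerpt shows that $\Psi(\gamma)$ is well-defined and finite for all $\gamma \in [0, \gamma_c)$, so $e^{(t-s)\Psi(\gamma_0)} < +\infty$ and $e^{-\gamma_0 X_{t-s}}$ is integrable. With finiteness in hand, the factorization of the conditional expectation is the standard fact that $\eE(\xi \eta \mid \mathcal{G}) = \xi \eE(\eta \mid \mathcal{G}) = \xi \eE(\eta)$ when $\xi$ is $\mathcal{G}$-measurable, $\eta$ is independent of $\mathcal{G}$, and all relevant quantities are integrable. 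No further obstacle remains; the proof is essentially a one-line consequence of the L\'{e}vy-Khintchine formula once the integrability bookkeeping is settled.
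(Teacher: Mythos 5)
Your proof is correct and follows essentially the same route as the paper, which simply defers to the proof of Theorem II.1.2 in Asmussen; the increment-factorization argument you write out, together with the finiteness of $\eE(e^{-\gamma_0 X_t})$ for $\gamma_0 < \gamma_c$, is exactly the content of that reference. No gaps.
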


\begin{proof}
From the definition of $\gamma_c$, we have that $\eE(e^{-\gamma X_t}) < +\infty$ for all $t \geq 0$ and $\gamma \in [0, \gamma_c)$. Imitating the proof of Theorem II.1.2, p.23 in \cite{asmussen2010}, we find that the process $(e^{-\gamma X_t} - e^{t \Psi(\gamma)})_{t \geq 0}$ is a martingale for each $\gamma \in [0, \gamma_c)$. In particular, if there exists $\gamma_0 > 0$ such that $\Psi(\gamma_0) = 0$, then $(e^{-\gamma_0 X_t})_{t \geq 0}$ is a martingale.
\end{proof}

Putting everything together, we can now prove the main theorem. Note that case (B) can also be deduced with some work from Proposition XI.2.3 and Theorem XI.2.6 p.337-338 in \cite{asmussen2010} and that case (A) is generally implicitly excluded by the \textit{safety loading requirement} $\delta > 0$.

\begin{proof}[Proof of Theorem \ref{theorem}]
Note that from (\ref{eqpsiprime}) we obtain
$$\lim_{\gamma \to 0+}\Psi'(\gamma) = \lim_{\gamma \to 0+}\left(-\delta + \sigma^2 \gamma + \int_{\rR}x\left(1-e^{-\gamma x}\right)\Pi(dx)\right) = -\delta.$$

\noindent Therefore, from the study of the function $\Psi$, we see that $\delta \leq 0$ in case (A), and $\delta > 0$ in cases (B), (C) and (D).

\textit{Case (A).} Let $u \geq 0$. In case (A), we have $\delta \leq 0$. Suppose first that $\delta < 0$, then, by Corollary \ref{cor_plimit}, $X_t \overset{a.s.}{\to} -\infty$ as $t \to +\infty$. This immediately implies that 
$$\pP\left(\inf_{t \geq 0}X_t \leq -u\right) \geq \pP\left(\inf_{t \geq 0}X_t = -\infty \right) = 1.$$

\noindent If $\delta = 0$, then by Corollary \ref{cor_plimit}, $\pP\left(\liminf_{t \to +\infty} X_t \leq -u\right) = 1$. As $\left(\{\inf_{t \geq n}X_t \leq -u\}\right)_{n \in \nN}$ is a decreasing sequence of events, $\pP\left(\inf_{t \geq m}X_t \leq -u\right) \leq \pP\left(\inf_{t \geq 0}X_t \leq -u\right)$, for each $m \in \nN$ and
\begin{equation*}
\begin{split}
\pP\left(\inf_{t \geq 0}X_t \leq -u\right) & \geq \lim_{m \to \infty}\pP\left(\inf_{t \geq m}X_t \leq -u\right) = \lim_{m \to \infty}\pP\left(\bigcap_{n = 0}^m\left\{\inf_{t\geq n}X_t \leq -u\right\}\right)\\
& = \pP\left(\bigcap_{n \in \nN}\left\{\inf_{t\geq n}X_t \leq -u\right\}\right) = \pP\left(\liminf_{t \to \infty}X_t \leq -u\right) = 1.
\end{split}
\end{equation*}

\textit{Case (B).} We will show that (i) and (ii) of Proposition \ref{thmexpasmp} hold. Because (B) holds, by Proposition \ref{proppsiroot}, (i) is satisfied. Now note that in case (B) we have $\delta > 0$ and, by Corollary 1, that $X_t \overset{a.s.}{\to} +\infty$, as $t \to +\infty$. So (ii) is also satisfied.

\textit{Case (C).} Because (C) holds, we have $\Psi(\gamma) < 0$, $\lim_{\gamma \to 0+}\Psi'(\gamma) = -\delta < 0$. We also have $\Psi'(\gamma) < 0$, for all $\gamma > 0$. But, from (\ref{eqpsiprime}), we see that $\Psi'(\gamma) \leq 0$, for all $\gamma > 0$, if, and only if, 
$$\sigma^2 \gamma + \int_{\rR}x\left(1-e^{-\gamma x}\right)\Pi(dx) \leq \delta, \text{ for all } \gamma > 0.$$
\noindent If $\sigma^2 > 0$, the limit of the left-hand side when $\gamma \to +\infty$ goes to $+\infty$, so this immediately implies that $\sigma^2 = 0$. Now let $I \triangleq \int_{\rR}x\left(1-e^{-\gamma x}\right)\Pi(dx)$, and note that 
$$I = \int_{\rR_-}x\left(1-e^{-\gamma x}\right)\Pi(dx) + \int_{0}^{1}x\left(1-e^{-\gamma x}\right)\Pi(dx) + \int_{1}^{+\infty}x\left(1-e^{-\gamma x}\right)\Pi(dx).$$

\noindent Note that $x(1-e^{-\gamma x}) \leq x$, for all $x \geq 1$ and $\gamma > 0$. So, taking the limit as $\gamma \to +\infty$ and using the dominated convergence theorem on the integral over $(1, +\infty)$ with Assumption (I), we obtain
$$\lim_{\gamma \to +\infty}\int_{-\infty}^1x\left(1-e^{-\gamma x}\right)\Pi(dx) \leq \delta - \int_1^{+\infty}x\Pi(dx).$$

\noindent But, $x(1-e^{-\gamma x}) \geq \gamma x^2$, for all $x < 0$ and $\gamma > 0$. The above inequality, therefore implies
$$\lim_{\gamma \to +\infty}\left(\gamma \int_{\rR_{-}}x^2\Pi(dx) + \int_{0}^{1}x\left(1-e^{-\gamma x}\right)\Pi(dx)\right) < +\infty,$$
\noindent which implies that $\int_{\rR_{-}}x^2\Pi(dx) = 0$. Now note that the function $x \mapsto x^2$ is strictly positive on $\rR_-$ except in $0$. But, by definition of the L\'{e}vy measure $\Pi(\{0\}) = 0$, so $x \mapsto x^2$ is strictly positive $\Pi$-a.e. So, $\int_{\rR_{-}}x^2\Pi(dx) = 0$ if, and only if, $\Pi(\rR_-) = 0$.

\textit{Case (D).} Let $u \geq 0$. Fix $\epsilon \in (0, \gamma_c)$ and define
$$Z^{\epsilon}_t = \frac{\Psi(\gamma_c - \epsilon)}{\gamma_c - \epsilon}t + X_t, \text{ for all } t \geq 0.$$
\noindent Then, because (D) holds $\Psi(\gamma_c - \epsilon) < 0$, so that $Z^{\epsilon}_t \leq X_t$, for all $t \geq 0$, and 
$$\pP\left(\inf_{0 \leq t < +\infty}X_t \leq -u\right) \leq \pP\left(\inf_{0 \leq t < +\infty}Z^{\epsilon}_t \leq -u\right).$$
\noindent Note that the Laplace exponent $\Psi^{\epsilon}$ of $Z^{\epsilon}$ is defined for $\gamma \in [0, \gamma_c)$ and given by
\begin{equation}
\begin{split}
\Psi^{\epsilon}(\gamma) & = -\left(\frac{\Psi(\gamma_c - \epsilon)}{\gamma_c - \epsilon} + \delta\right)\gamma + \frac{\sigma^2}{2}\gamma^2 + \int_{\rR}\left(e^{-\gamma x} -1 + \gamma x\right)\Pi(x) \\
& = - \frac{\Psi(\gamma_c - \epsilon)}{\gamma_c - \epsilon}\gamma + \Psi(\gamma).
\end{split}
\end{equation}
\noindent Now, we will show that $Z^{\epsilon}$ satisfies (i) and (ii) of Proposition \ref{thmexpasmp}. Condition (i) is satisfied for $\gamma_0 = \gamma_c - \epsilon$, because $\Psi^{\epsilon}(\gamma_c - \epsilon) = 0$. For condition (ii), note that because $\Psi^{\epsilon}$ has a root and is convex, we have $\lim_{\gamma \to 0+}(\Psi^{\epsilon})'(\gamma) < 0$. Thus, by Corollary 1, we obtain that $Z^{\epsilon}_t \overset{a.s.}{\to} +\infty$, so that (ii) is also satisfied. Therefore, we obtain
$$\pP\left(\tau(u) < +\infty\right) \leq e^{-(\gamma_c-\epsilon) u}.$$
\noindent As this is true for each $\epsilon \in (0, \gamma_c)$, we can let $\epsilon \to 0+$ to finish the proof.
\end{proof}

\bibliographystyle{plain}
\bibliography{angers_light}

\end{document}